\newcommand{\Hom}{\operatorname{Hom}\nolimits}
\renewcommand{\Im}{\operatorname{Im}\nolimits}
\newcommand{\pd}{\operatorname{pd}\nolimits}
\newcommand{\Ext}{\operatorname{Ext}\nolimits}
\renewcommand{\H}{\operatorname{H}\nolimits}
\newcommand{\R}{\operatorname{\bf{R}}\nolimits}
\newcommand{\m}{\operatorname{\mathfrak{m}}\nolimits}
\newcommand{\cx}{\operatorname{cx}\nolimits}
\newcommand{\T}{\operatorname{\mathcal{T}}\nolimits}
\newcommand{\C}{\operatorname{\mathcal{C}}\nolimits}
\newcommand{\D}{\operatorname{\mathcal{D}}\nolimits}
\newcommand{\thick}{\operatorname{thick}\nolimits}
\newcommand{\s}{\operatorname{\Sigma}\nolimits}
\newtheorem{theorem}{Theorem}[section]
\newtheorem{corollary}[theorem]{Corollary}
\newtheorem{lemma}[theorem]{Lemma}
\theoremstyle{definition}
\theoremstyle{definition}
\theoremstyle{definition}
\theoremstyle{definition}
\theoremstyle{definition}
\theoremstyle{definition}
\theoremstyle{remark}
\theoremstyle{definition}
\theoremstyle{definition}
\begin{document}

\title{On complexes of finite complete intersection dimension}
\author{Petter Andreas Bergh}
\address{Institutt for matematiske fag \\
  NTNU \\ N-7491 Trondheim \\ Norway}
\email{bergh@math.ntnu.no}

\thanks{The author was supported by NFR Storforsk grant no.\
167130}

\subjclass[2000]{13D25, 18E30, 18G10}

\keywords{Finite complete intersection dimension, complexity, virtually small complexes}

\maketitle

\begin{abstract}
We study complexes of finite complete intersection dimension in the derived category of a local ring. Given such a complex, we prove that the thick subcategory it generates contains complexes of all possible complexities. In particular, we show that such a complex is virtually small, answering a question raised by Dwyer, Greenlees and Iyengar.
\end{abstract}

\section{Introduction}

In \cite{DwyerGreenleesIyengar}, the authors raised the question
whether every nonzero homologically finite complex of finite
complete intersection dimension over a local ring is virtually
small. In other words, given such a ring $A$ and such a complex $M
\in D(A)$, is the intersection
$$\thick_{D(A)}(M) \cap \thick_{D(A)}(A)$$
nonzero? We give an affirmative answer to this question, by showing
that the thick subcategory generated by $M$ contains a nonzero
complex of complexity zero. In fact, we show that if the complexity
of $M$ is $c$, then $\thick_{D(A)}(M)$ contains a nonzero complex of
complexity $t$ for every $0 \le t \le c$. The homologically finite
complexes of complexity zero are precisely the complexes of finite
projective dimension. Moreover, a homologically finite complex
belongs to $\thick_{D(A)}(A)$ if and only if its projective
dimension is finite. Thus, the results mentioned indeed settle the
above question.

\section{Notation and terminology}

Let $(A, \m, k)$ be a local (meaning commutative Noetherian local)
ring, and denote by $D(A)$ the derived category of (not necessarily
finitely generated) $A$-modules. A complex
$$M \colon \cdots \to M_{n+1} \to M_n \to M_{n-1} \to \cdots$$
in $D(A)$ is \emph{bounded below} if $M_n=0$ for $n \ll 0$, and
\emph{bounded above} if $M_n=0$ for $n \gg 0$. The complex is
\emph{bounded} if it is both bounded below and bounded above, and
\emph{finite} if it is bounded and degreewise finitely generated.
The \emph{homology} of $M$, denoted $\H (M)$, is the complex with
$\H (M)_n = \H_n (M)$, and with trivial differentials. When $\H
(M)$ is finite, then $M$ is said to be \emph{homologically
finite}.

As shown for example in \cite{Roberts}, when the complex $M$ is
homologically finite, then it has a minimal free resolution. Thus,
there exists a quasi-isomorphism $F \simeq M$, where $F$ is a
bounded below complex
$$\cdots \to F_{n+1} \xrightarrow{d_{n+1}} F_n \xrightarrow{d_n} F_{n-1} \to \cdots$$
of finitely generated free modules, and where $\Im d_n \subseteq \m
F_{n-1}$. This resolution is unique up to isomorphism, and so for
each integer $n$ the rank of the free module $F_n$ is a well defined
invariant of $M$. This is the $n$th \emph{Betti number} $\beta_n
(M)$ of $M$, and the corresponding generating function
$\sum_{n=0}^{\infty} \beta_n (M)t^n$ is the Poincar{\'e} series
$P(M,t)$ of $M$. The \emph{complexity} of $M$, denoted $\cx M$, is
defined as
$$\cx M \stackrel{\text{def}}{=} \inf \{ t \in \mathbb{N} \cup
\{ 0 \} \mid \exists a \in \mathbb{R} \text{ such that } \beta_n (M)
\le an^{t-1} \text{ for } n \gg 0 \}.$$ The complexity of a
homologically finite complex is not necessarily finite. In fact, by
a theorem of Gulliksen (cf.\ \cite{Gulliksen}), finiteness of the
complexities of all homologically finite complexes in $D(A)$ is
equivalent to $A$ being a complete intersection ring.

Suppose that our complex $M$ is homologically finite, with a minimal
free resolution $F$. Given a complex $N \in D(A)$, the complex
$\Hom_A (F,N)$ is denoted by $\R \Hom_A(M,N)$. Up to
quasi-isomorphism, this complex is well defined, hence so is the
cohomology group
$$\Ext_A^n(M,N) \stackrel{\text{def}}{=} \H_{-n} \left ( \R
\Hom_A(M,N) \right )$$ for every integer $n$. The \emph{projective
dimension} of $M$, denoted $\pd_A M$, is defined as
$$\sup \{ n \mid \Ext_A^n(M,k) \neq 0 \},$$
which is the same as the supremum of all integers $n$ such that
$F_n$ is nonzero. Namely, since the complex $F$ is minimal, the
differentials in the complex $\Hom_A(F,k)$ are trivial, and $\beta_n
(M) = \dim_k \Ext_A^n(M,k)$ for all $n$. In particular, the
projective dimension of $M$ is finite if and only if its complexity
is zero.

The derived category $D(A)$ is triangulated, the suspension functor
$\s$ being the left shift of a complex. Given complexes $M$ and $N$
as above, for each $n$ we may identify the cohomology group
$\Ext_A^n(M,N)$ with $\Hom_{D(A)}(M, \s^nN)$. For complexes $X$ and
$Y$ in $D(A)$, denote the graded $A$-module $\oplus_{n=0}^{\infty}
\Ext_A^n(X,Y)$ by $\Ext_A^*(X,Y)$. Using composition of maps in
$D(A)$, the graded $A$-module $\Ext_A^*(M,M)$ becomes a ring, and
$\Ext_A^*(M,N)$ becomes a graded right $\Ext_A^*(M,M)$-module.

We end this section by recalling the notion of nearness in an
arbitrary triangulated category $\T$ with a suspension functor $\s$.
A subcategory of $\T$ is \emph{thick} if it is a full triangulated
subcategory closed under direct summands. Now let $\C$ and $\D$ be
subcategories of $\T$. We denote by $\thick^1_{\T} ( \C )$ the full
subcategory of $\T$ consisting of all the direct summands of finite
direct sums of shifts of objects in $\C$. Furthermore, we denote by
$\C \ast \D$ the full subcategory of $\T$ consisting of objects $M$
such that there exists a distinguished triangle
$$C \to M \to D \to \s C$$
in $\T$, with $C \in \C$ and $D \in \D$. Now for each $n \ge 2$,
define inductively $\thick^n_{\T} ( \C )$ to be $\thick_{\T}^1 \left
( \thick^{n-1}_{\T} ( \C ) \ast \thick^1_{\T} ( \C ) \right )$, and
denote $\bigcup_{n=1}^{\infty} \thick^n_{\T} ( \C )$ by $\thick_{\T}
( \C )$. This is the smallest thick subcategory of $\T$ containing
$\C$.

\section{Complexes of finite CI-dimension}

Let $A$ be a local ring. Recall that a \emph{quasi deformation} of
$A$ is a diagram $A \to R \leftarrow Q$ of local homomorphisms, in
which $A \to R$ is faithfully flat, and $R \leftarrow Q$ is
surjective with kernel generated by a regular sequence. A
homologically finite complex $M \in D(A)$ has finite \emph{complete
intersection dimension} if there exists such a quasi deformation for
which $\pd_Q (R \otimes_A M)$ is finite. From now on, we write
``CI-dimension" instead of ``complete intersection dimension".

The notion of CI-dimension was first introduced for modules in
\cite{AvramovGasharovPeeva}. The terminology reflects the fact that
a local ring is a complete intersection precisely when all its
finitely generated modules have finite CI-dimension. The same holds
if we replace ``modules" with ``homologically finite complexes".

In order to prove the main result, we need the following lemma. It
shows that finite CI-dimension is preserved in thick
subcategories.

\begin{lemma}\label{CIthick}
Let $A$ be a local ring, and let $M \in D(A)$ be a homologically
finite complex of finite CI-dimension. Then every complex in
$\thick_{D(A)}(M)$ has finite CI-dimension.
\end{lemma}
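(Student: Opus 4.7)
The plan is to prove the lemma by induction on $n$, showing that every complex in $\thick^n_{D(A)}(M)$ has finite CI-dimension; taking the union over $n$ then gives the result. The four operations used to build these subcategories are shifts, finite direct sums, direct summands, and completion of distinguished triangles, so it suffices to check closure of the class of homologically finite complexes of finite CI-dimension under each of these.

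For the base case $n=1$, fix a quasi deformation $A \to R \leftarrow Q$ witnessing that $M$ has finite CI-dim. Because $A \to R$ is flat, the functor $R \otimes_A -$ commutes with finite direct sums and shifts, so any finite direct sum of shifts of $R\otimes_A M$ still has finite projective dimension over $Q$; direct summands of complexes of finite $Q$-projective dimension have finite $Q$-projective dimension as well. Thus every complex in $\thick^1_{D(A)}(M)$ has finite CI-dim, witnessed by the \emph{same} quasi deformation used for $M$.

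For the inductive step, any $N \in \thick^n_{D(A)}(M)$ is a direct summand of a finite direct sum of shifts of a complex $N'$ fitting in a distinguished triangle $L \to N' \to L' \to \s L$ with $L \in \thick^{n-1}_{D(A)}(M)$ and $L' \in \thick^1_{D(A)}(M)$. By the induction hypothesis and the base case, $L$ and $L'$ each have finite CI-dim, and the problem reduces to showing that the middle term of such a triangle has finite CI-dim. The crux is to produce a single quasi deformation $A \to R \leftarrow Q$ for which both $R\otimes_A L$ and $R\otimes_A L'$ have finite projective dimension over $Q$: once this is done, applying the exact functor $R\otimes_A -$ to the triangle yields a distinguished triangle in $D(Q)$ whose outer terms have finite projective dimension, whence so has the middle term $R\otimes_A N'$, and the summand $R\otimes_A N$ inherits this property.

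The main obstacle is therefore the construction of a common quasi deformation. Given quasi deformations $A \to R_i \leftarrow Q_i$ for $i=1,2$ witnessing finite CI-dim of $L$ and $L'$ separately, I would combine them by the standard construction used in the theory: form a faithfully flat $A$-algebra $R$ dominating both $R_1$ and $R_2$, for instance by taking an appropriate local ring obtained from $R_1 \otimes_A R_2$, and build a local ring $Q$ surjecting onto $R$ with kernel generated by a regular sequence assembled from the original regular sequences defining $Q_i \twoheadrightarrow R_i$. One then verifies, using flat base change along $R_i \to R$ together with the standard behaviour of projective dimension under quotients by regular sequences, that $\pd_Q(R\otimes_A L)$ and $\pd_Q(R\otimes_A L')$ remain finite. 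This is the complex-level analogue of an established module-theoretic construction from \cite{AvramovGasharovPeeva}, and the arguments should transfer directly because the relevant flatness and regularity properties are insensitive to the complex structure beyond boundedness of homology.
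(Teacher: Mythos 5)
Your induction scheme is the right one, and your base case in fact proves more than you later use: every object of $\thick^1_{D(A)}(M)$ has finite projective dimension over $Q$ for the \emph{one} quasi deformation $A \to R \leftarrow Q$ chosen for $M$. The gap appears when you weaken the inductive hypothesis to ``$L$ and $L'$ have finite CI-dimension'' and are then forced to merge two a priori unrelated quasi deformations. That merging step is not an established result, and your sketch does not go through as stated: in a quasi deformation $A \to R_i \leftarrow Q_i$ there is no homomorphism $A \to Q_i$ at all, so there is no natural ring ``assembled'' from $Q_1$ and $Q_2$ over $A$ surjecting onto a localization of $R_1 \otimes_A R_2$; and even if some candidate $Q$ were produced, transferring finiteness of $\pd_{Q_i}(R_i \otimes_A L)$ to finiteness over $Q$ would require a flat local map $Q_i \to Q$ compatible with $R_i \to R$, which your construction does not supply. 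The quasi-deformation juggling in \cite{AvramovGasharovPeeva} (and its complex version in \cite{Sather-Wagstaff}) modifies a \emph{single} quasi deformation attached to a \emph{single} module or complex; producing one quasi deformation that simultaneously witnesses finite CI-dimension of two given complexes is precisely the delicate point, and it cannot be waved through by analogy.

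The fix is to keep the stronger statement you already have in the base case as the inductive hypothesis, which is exactly what the paper does: fix once and for all the quasi deformation $A \to R \twoheadleftarrow Q$ with $\pd_Q(R \otimes_A M) < \infty$, and prove by induction on $n$ that \emph{every} $X \in \thick^n_{D(A)}(M)$ satisfies $\pd_Q(R \otimes_A X) < \infty$. Since $A \to R$ is flat, $R \otimes_A -$ sends the triangle $Y \to X \to Z \to \s Y$ (with $Y \in \thick^n_{D(A)}(M)$, $Z \in \thick^1_{D(A)}(M)$) to a triangle of complexes of $R$-modules, viewed as $Q$-complexes via $Q \twoheadrightarrow R$; finiteness of $\pd_Q$ for the two outer terms then gives it for the middle term, and shifts, finite sums and summands are harmless. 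With the fixed $Q$ carried through the induction, no common quasi deformation for two different complexes is ever needed, and the lemma follows by taking the union over $n$.
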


\begin{proof}
Let $A \to R \twoheadleftarrow Q$ be a quasi-deformation of $A$ such
that $\pd_Q (R \otimes_A M)$ is finite. We show by induction that
for all $n \ge 1$, every complex $X \in \thick_{D(A)}^n(M)$
satisfies $\pd_Q (R \otimes_A X) < \infty$. If $X$ is a direct
summand of finite direct sums of shifts of $M$, then this clearly
holds. Therefore $\pd_Q (R \otimes_A X)$ is finite for all complexes
$X \in \thick_{D(A)}^1(M)$. Next, suppose the claim holds for all
$1, \dots, n$, and let $X$ be a complex in $\thick_{D(A)}^n(M) \ast
\thick_{D(A)}^1(M)$. Then there exists a triangle
$$Y \to X \to Z \to \s Y$$
in $D(A)$, where $Y$ and $Z$ are complexes in $\thick_{D(A)}^n(M)$
and $\thick_{D(A)}^1(M)$, respectively. By induction, $\pd_Q (R
\otimes_A Y)$ and $\pd_Q (R \otimes_A Z)$ are both finite, hence so
is $\pd_Q (R \otimes_A X)$. Since
$$\thick_{D(A)}^{n+1}(M) = \thick_{D(A)}^1 \left (
\thick_{D(A)}^n(M) \ast \thick_{D(A)}^1(M) \right ),$$ the proof is
complete.
\end{proof}

Next, we prove the main result; the thick subcategory generated by a
complex of finite CI-dimension contains complexes of all possible
complexities.

\begin{theorem}\label{main}
Let $A$ be a local ring, and let $M \in D(A)$ be a nonzero
homologically finite complex of finite CI-dimension. Then for every
$0 \le t \le \cx M$, there exists a nonzero complex in
$\thick_{D(A)}(M)$ of complexity $t$.
\end{theorem}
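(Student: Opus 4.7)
The plan is to induct on $c - t$, where $c = \cx M$. The base case $t = c$ is handled by $M$ itself. For the inductive step it suffices to produce a nonzero complex $N \in \thick_{D(A)}(M)$ with $\cx N = c - 1$ whenever $c \ge 1$, since Lemma~\ref{CIthick} then guarantees that $N$ has finite CI-dimension, and invoking the induction hypothesis inside $\thick_{D(A)}(N) \subseteq \thick_{D(A)}(M)$ produces nonzero complexes of every complexity $0, 1, \dots, c - 1$.

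To construct such an $N$, I would invoke the Eisenbud--Gulliksen cohomology operators attached to the quasi-deformation $A \to R \twoheadleftarrow Q$ witnessing finite CI-dimension, with $\Ker(Q \twoheadrightarrow R) = (f_1, \dots, f_s)$ a regular sequence. These operators endow $\Ext_R^*(R \otimes_A M, R \otimes_A M)$ with a central polynomial subring in degree-$2$ generators $\chi_1, \dots, \chi_s$, over which $\Ext_R^*(R \otimes_A M, R/\mathfrak{m}_R)$ is finitely generated. Faithful flatness of $A \to R$ gives $\cx_R (R \otimes_A M) = \cx_A M = c$, so the Krull dimension of this module over the operator ring equals $c$. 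Prime avoidance applied to the associated primes in positive degree then yields a homogeneous element $\chi$ of some degree $2d$ that acts as a non-zerodivisor on the module in all sufficiently large degrees and cuts its Krull dimension down to $c - 1$.

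Via the base-change isomorphism $\Ext_A^*(M, M) \otimes_A R \cong \Ext_R^*(R \otimes_A M, R \otimes_A M)$, I would realize $\chi$ as (a scalar multiple of) the image of a class $\eta \in \Ext_A^{2d}(M, M)$, possibly after refining the quasi-deformation so that the requisite lifting becomes available. Form the mapping cone triangle
$$M \xrightarrow{\eta} \s^{2d} M \to N \to \s M$$
in $D(A)$; then $N$ lies in $\thick_{D(A)}^2(M) \subseteq \thick_{D(A)}(M)$. Applying $\R \Hom_A(-, k)$ gives a long exact sequence whose connecting map is right multiplication by $\eta$ on $\Ext_A^*(M, k)$, and by the non-zerodivisor property (transferred to $A$ by faithful flatness) this degenerates in high degree to
$$\Ext_A^{n+1}(N, k) \cong \Ext_A^n(M, k) \big/ \eta \cdot \Ext_A^{n - 2d}(M, k),$$
yielding $\beta_{n+1}(N) = \beta_n(M) - \beta_{n - 2d}(M)$ for $n \gg 0$. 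Since $\beta_n(M)$ grows like a polynomial of degree $c - 1$, this difference grows like one of degree $c - 2$, so $\cx N = c - 1$.

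The main obstacle is the lifting step: the cohomology operators live naturally over $R$ rather than $A$, and a usable class $\eta \in \Ext_A^{2d}(M, M)$ has to be chosen so that its image in $\Ext_R^{2d}$ is a non-zerodivisor on $\Ext_R^*(R \otimes_A M, R/\mathfrak{m}_R)$ in high degree. This is expected to succeed because the base-change isomorphism surjects onto the operator ring after suitable refinement of the quasi-deformation, but has to be argued carefully. Nonvanishing of $N$ is not an issue for $c \ge 2$ (an isomorphism $\eta$ would force $M$ to be $2d$-periodic, contradicting the polynomial growth of $\beta_n$), and in the edge case $c = 1$ the same construction produces an $N$ of finite projective dimension as required.
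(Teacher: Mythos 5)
Your strategy is the same as the paper's: produce a central cohomology class of positive even degree acting injectively on $\Ext_A^*(M,k)$ in high degrees, cone it off inside $\thick_{D(A)}(M)$, show the cone has complexity $c-1$, and induct using Lemma~\ref{CIthick}. However, the two steps you leave open are genuine gaps, not routine checks. The first is the lifting step you yourself flag: choosing a single operator $\chi$ over $R$ by prime avoidance and then trying to realize it as the image of some $\eta \in \Ext_A^{2d}(M,M)$ does not work as stated, because under the base-change isomorphism $R \otimes_A \Ext_A^*(M,M) \cong \Ext_R^*(R\otimes_A M, R\otimes_A M)$ a general element is of the form $\sum r_i \otimes \eta_i$ rather than $1\otimes \eta$, and ``refining the quasi-deformation'' is not an argument. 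The paper avoids lifting any particular element: by \cite[Theorem 4.9]{AvramovGasharovPeeva} the degree-two central subalgebra $Z_R(R\otimes_A M)$ is identified with $R\otimes_A Z_A(M)$, so the Noetherian property of $\Ext_R^*(R\otimes_A M, R\otimes_A k)$ from \cite{AvramovSun} descends by faithful flatness to make $\Ext_A^*(M,k)$ Noetherian over $Z_A(M)$, and then \cite[Lemma 2.5]{BIKO} is applied directly over $A$ to produce $\eta \in Z_A(M)$ with the required injectivity. Some such descent of the finiteness property (rather than of an element) is needed to close your gap.

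The second gap is the final complexity count. From $\beta_{n+1}(N)=\beta_n(M)-\beta_{n-2d}(M)$ for $n\gg 0$ you conclude that the difference ``grows like a polynomial of degree $c-2$,'' but complexity $c$ only provides a polynomial upper bound on $\beta_n(M)$; it does not make the Betti sequence eventually polynomial, and the difference of a sequence of polynomial growth can grow as fast as the sequence itself, so neither $\cx N \le c-1$ nor the lower bound $\cx N \ge c-1$ follows from growth estimates alone. The paper closes this by quoting rationality of $P(M,t)$ for complexes of finite CI-dimension \cite[Corollary 3.10(iv)]{Sather-Wagstaff}, writing $P(N,t)=g(t)+t(1+t+\cdots+t^{2d-1})(1-t)P(M,t)$ and reading off complexity as the order of the pole at $t=1$, which visibly drops by exactly one since $t(1+t+\cdots+t^{2d-1})$ does not vanish at $t=1$; you would need this (or eventual quasi-polynomiality of the Betti numbers, which again comes from the finite CI-dimension hypothesis) to justify the step. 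Finally, your treatment of nonvanishing at $c=1$ is incomplete as written, but the issue is vacuous: $N\neq 0$ in every case simply because $N=0$ would force $M\simeq \s^{2d}M$ in $D(A)$, impossible for a nonzero homologically finite complex.
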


\begin{proof}
The proof is by induction on the complexity $c$ of $M$. If $c=0$,
then there is nothing to prove, so suppose that $c$ is nonzero. We shall construct a nonzero complex in
$\thick_{D(A)}(M)$ of complexity $c-1$.

Let $A \to R \twoheadleftarrow Q$ be a quasi-deformation of $A$
such that $\pd_Q (R \otimes_A M)$ is finite, and let $Z_R( R
\otimes_A M)$ be the graded $R$-subalgebra of $\Ext_R^*(R
\otimes_A M, R \otimes_A M)$ generated by the central elements in
degree two. By \cite[Corollary 5.1]{AvramovSun}, the $Z_R( R
\otimes_A M)$-module $\Ext_R^*(R \otimes_A M, R \otimes_A k)$ is
Noetherian, where $k$ is the residue field of $A$. Now let $Z_A
(M)$ be the graded $A$-subalgebra of $\Ext_A^*(M,M)$ generated by
the central elements in degree two. Then by \cite[Theorem
4.9]{AvramovGasharovPeeva}, we may identify $Z_R( R \otimes_A M)$
with $R \otimes_A Z_A(M)$, and so by faithfully flat descent the
$Z_A(M)$-module $\Ext_A^*(M,k)$ is Noetherian (see also \cite[Section 7]{AvramovIyengar}).

By \cite[Lemma 2.5]{BIKO}, there exists a positive degree element
$\eta \in Z_A(M)$ with the property that scalar multiplication
$$\Ext_A^n(M,k) \xrightarrow{\cdot \eta} \Ext_A^{n+ |\eta|}(M,k)$$
is injective for $n \gg 0$. This element corresponds to a map $M
\xrightarrow{f_{\eta}} \s^{|\eta|}M$ in $D(A)$, and completing this
map we obtain a triangle
$$M \xrightarrow{f_{\eta}} \s^{|\eta|}M \to K \to \s M$$
in $\thick_{D(A)}(M)$. Note that the object $K$ is nonzero; if
not, then $M$ would be isomorphic to $\s^{|\eta|}M$, and this is
impossible. The triangle induces a long exact sequence
$$\cdots \to \Ext_A^n(K,k) \to \Ext_A^{n-|\eta|}(M,k) \xrightarrow{\cdot
(-1)^n \eta} \Ext_A^n(M,k) \to \Ext_A^{n+1}(K,k) \to \cdots$$ in
cohomology, hence for some integer $n_0$ the equality
$\beta_{n+1}(K) = \beta_n(M) - \beta_{n-|\eta|}(M)$ holds for $n \ge
n_0$. The Poincar{\'e} series of $K$ is then given by
\begin{eqnarray*}
P(K,t) & = & \sum_{n=0}^{\infty} \beta_n(K)t^n \\
& = & \sum_{n=0}^{n_0} \beta_n(K)t^n + \sum_{n=n_0+1}^{\infty}
\beta_n(K)t^n \\
& = & \sum_{n=0}^{n_0} \beta_n(K)t^n + \sum_{n=n_0+1}^{\infty}
\left ( \beta_{n-1}(M) - \beta_{n-|\eta|-1}(M) \right ) t^n \\
& = & g(t) + (t- t^{|\eta|+1}) P(M,t) \\
& = & g(t) + t(1+t+ \cdots + t^{|\eta|-1})(1-t)P(M,t),
\end{eqnarray*}
where $g(t)$ is a polynomial in $\mathbb{Z}[t]$.

By \cite[Corollary 3.10(iv)]{Sather-Wagstaff}, the Poincar{\'e}
series $P(M,t)$ of $M$ is rational, hence so is $P(K,t)$. Therefore, the complexities of $M$ and $K$ equal the orders of the poles at $t=1$ of $P(M,t)$ and $P(K,t)$, respectively. Consequently, the complexity of $K$ is $\cx M -1$. By Lemma \ref{CIthick}, the complex $K$, being an object in
$\thick_{D(A)}(M)$, has finite CI-dimension. By induction, for every $0 \le t \le \cx K$, there exists a nonzero complex in $\thick_{D(A)}(K)$ of complexity $t$. This completes the proof.
\end{proof}

Using \cite[3.8]{DwyerGreenleesIyengar}, the following corollary
follows immediately from Theorem \ref{main}. It settles the
question, raised in \cite{DwyerGreenleesIyengar}, whether every
nonzero homologically finite complex of finite CI-dimension over a
local ring is virtually small.

\begin{corollary}\label{virtuallysmall}
Let $A$ be a local ring, and let $M \in D(A)$ be a nonzero
homologically finite complex of finite CI-dimension. Then
$$\thick_{D(A)}(M) \cap \thick_{D(A)}(A)$$
is nonzero.
\end{corollary}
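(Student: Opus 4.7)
The plan is to produce, via Theorem \ref{main}, a nonzero complex in $\thick_{D(A)}(M)$ whose projective dimension is finite, and then to invoke \cite[3.8]{DwyerGreenleesIyengar} to place it in $\thick_{D(A)}(A)$. Concretely, I would apply Theorem \ref{main} with $t = 0$, a legitimate choice since $0 \le \cx M$ automatically. This yields a nonzero complex $N \in \thick_{D(A)}(M)$ with $\cx N = 0$.

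Next, I would observe that $N$ is homologically finite. The collection of homologically finite complexes is closed under shifts, direct summands, and extensions, hence forms a thick subcategory of $D(A)$; since it contains $M$, it contains all of $\thick_{D(A)}(M)$. Combining this with the remark in Section 2 that a homologically finite complex has complexity zero if and only if its projective dimension is finite, I conclude that $\pd_A N < \infty$.

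Finally, I would apply \cite[3.8]{DwyerGreenleesIyengar}, which characterizes $\thick_{D(A)}(A)$ as the class of homologically finite complexes of finite projective dimension. This places $N$ in $\thick_{D(A)}(A)$ as well, so $N$ is a nonzero object of $\thick_{D(A)}(M) \cap \thick_{D(A)}(A)$, proving the corollary. There is no substantive obstacle here: all the work is in Theorem \ref{main}, and the corollary is a bookkeeping step matching ``complexity zero'' to ``finite projective dimension'' to ``thick closure of $A$.''
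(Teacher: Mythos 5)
Your proposal is correct and follows the paper's own route exactly: apply Theorem \ref{main} with $t=0$ to get a nonzero complex of complexity zero in $\thick_{D(A)}(M)$, identify complexity zero with finite projective dimension, and use \cite[3.8]{DwyerGreenleesIyengar} to conclude membership in $\thick_{D(A)}(A)$. The paper leaves these bookkeeping steps implicit (``follows immediately''), and your spelled-out version, including the check that objects of $\thick_{D(A)}(M)$ are homologically finite, is a faithful expansion of the same argument.
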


We also obtain the following criterion for a local ring to be
Gorenstein.

\begin{corollary}\label{Gorenstein}
Let $A$ be a local ring, and suppose that for every homologically
finite complex $M \in D(A)$ the thick subcategory $\thick_{D(A)}(M)$
contains a nonzero complex of finite CI-dimension. Then $A$ is
Gorenstein.
\end{corollary}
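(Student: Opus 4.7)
The plan is to combine the hypothesis with Theorem~\ref{main} twice, first to conclude that $A$ is Cohen--Macaulay via the New Intersection Theorem, and then to produce a nonzero complex of both finite projective and finite injective dimension, which forces Gorensteinness by a theorem of Foxby.

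First I would apply the hypothesis to the residue field $M = k$. This yields a nonzero complex $N \in \thick_{D(A)}(k)$ of finite CI-dimension, and Theorem~\ref{main} applied to $N$ with $t = 0$ produces a nonzero complex $P \in \thick_{D(A)}(N)$ of complexity zero, i.e., of finite projective dimension. Since every object of $\thick_{D(A)}(k)$ has homology of finite length (by an easy induction on amplitude and length, every such complex is built from $k$ via triangles, shifts and summands), the complex $P$ is a nonzero perfect complex whose homology has finite length. The New Intersection Theorem of Peskine--Szpiro--Roberts then forces $A$ to be Cohen--Macaulay.

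To upgrade from Cohen--Macaulay to Gorenstein, I would pass to the completion $\hat A$, which is a complete Cohen--Macaulay local ring and therefore admits a canonical module $\omega_{\hat A}$, a finitely generated $\hat A$-module of finite injective dimension. The hypothesis transfers to $\hat A$ under the faithfully flat base change $A \to \hat A$, since any quasi-deformation of $A$ witnessing finite CI-dimension induces one over $\hat A$. Applying the (transferred) hypothesis over $\hat A$ to $M = \omega_{\hat A}$ and invoking Theorem~\ref{main} once more yields a nonzero complex $Q \in \thick_{D(\hat A)}(\omega_{\hat A})$ of finite projective dimension. Because finite injective dimension is preserved by shifts, direct summands, and extensions in triangles, every object of $\thick_{D(\hat A)}(\omega_{\hat A})$ has finite injective dimension, and in particular $Q$ does. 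Foxby's theorem---that a local ring admitting a nonzero homologically finite complex of both finite projective and finite injective dimension must be Gorenstein---then gives Gorensteinness of $\hat A$, and hence of $A$, since $A$ is Gorenstein if and only if $\hat A$ is.

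The main obstacle I expect is the transfer step to the completion for the specific complex $\omega_{\hat A}$, which need not descend from any $A$-module. One must argue carefully via faithful flatness that enough $\hat A$-complexes admit a CI-dimension witness for the hypothesis to be invoked at $\omega_{\hat A}$. An alternative would be to construct a nonzero complex of finite injective dimension directly inside some $\thick_{D(A)}(M)$ over $A$ itself, bypassing the completion entirely, and then combine with the output of the first stage via Foxby's theorem.
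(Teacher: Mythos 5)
There are two genuine gaps, one in each stage of your plan. First, the Cohen--Macaulay step fails: the corollary of the New Intersection Theorem you invoke is a statement about \emph{modules} (a nonzero finite-length module of finite projective dimension forces $\Depth A = \dim A$ via Auslander--Buchsbaum), and it does not extend to complexes. Over \emph{every} local ring the Koszul complex on a generating set of $\m$ (or on a system of parameters) is a nonzero perfect complex with finite-length homology, and it lies in $\thick_{D(A)}(k)$; so producing such a complex $P$ carries no information about $A$. What the New Intersection Theorem gives for a perfect complex with finite-length homology is only a lower bound on its amplitude (number of degrees), not the inequality $\Depth A \ge \dim A$. (This step is also unnecessary: a complete local ring always admits a dualizing complex, Cohen--Macaulay or not.)

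Second, and more seriously, the step you yourself flag as the ``main obstacle'' is exactly where the proof lives, and it is not closed. The hypothesis quantifies only over complexes in $D(A)$, so it cannot be invoked at $\omega_{\hat A}$ (or at a dualizing complex of $\hat A$), which in general is not extended from any $A$-complex; faithful flatness of $A \to \hat A$ pushes finite CI-dimension forward but does not transfer the hypothesis to $\hat A$-complexes that do not descend. Your fallback of working over $A$ itself stalls as well: a local ring need not admit \emph{any} nonzero homologically finite complex of finite injective dimension (its existence already forces Cohen--Macaulayness, and even then a canonical module may be missing). In effect you are trying to reprove the implication ``every homologically finite complex is virtually small $\Rightarrow$ $A$ is Gorenstein,'' which is precisely the cited result \cite[Theorem 9.11]{DwyerGreenleesIyengar}; the paper's proof is just the observation that, for any homologically finite $M$, the hypothesis gives a nonzero $N \in \thick_{D(A)}(M)$ of finite CI-dimension, Corollary \ref{virtuallysmall} applied to $N$ gives a nonzero object of $\thick_{D(A)}(N) \cap \thick_{D(A)}(A) \subseteq \thick_{D(A)}(M) \cap \thick_{D(A)}(A)$, and then that theorem applies. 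A Foxby-type argument of the kind you sketch can be made to work, but the descent/ascent analysis to the completion is the missing ingredient, and it is the content of the quoted theorem rather than something your outline supplies.
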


\begin{proof}
Follows immediately from Corollary \ref{virtuallysmall} and
\cite[Theorem 9.11]{DwyerGreenleesIyengar}.
\end{proof}

\section*{Acknowledgements}

I would like to thank Srikanth Iyengar for valuable
comments on this paper.

\end{document}